\numberwithin{equation}{section}
\newcommand{\bff}{\mathbb{F}_q}
\newcommand{\bffm}[1]{\mathbb{F}_{q^{#1}}}
\newcommand{\cff}{\mathbb{C}}
\newcommand{\gl}[2][\bff]{\mathrm{GL}_{#2}(#1)}
\theoremstyle{plain}
\newtheorem{seclem}{Lemma}[section]
\newtheorem{Theorem}{Theorem}
\newtheorem*{Green}{Theorem}
\newtheorem{Greencor}{Corollary}
\theoremstyle{definition}
\newtheorem{Def}[seclem]{Definition}
\theoremstyle{remark}
\author{Ofir Gorodetsky\thanks{ofir.goro@gmail.com}}
\author{Zahi Hazan\thanks{zahi.hazan@gmail.com}}
\affil{Raymond and Beverly Sackler School of Mathematical Sciences, Tel Aviv University}
\title{On a q-Identity Arising from the Dimension of a Representation of GL(n) over a Finite Field}
\date{\vspace{-5ex}}
\newcommand{\subjclass}[2][1991]{%
  \let\@oldtitle\@title%
  \gdef\@title{\@oldtitle\footnotetext{#1 \emph{Mathematics subject classification.} #2}}%
}
\newcommand{\keywords}[1]{%
  \let\@@oldtitle\@title%
  \gdef\@title{\@@oldtitle\footnotetext{\emph{Key words and phrases.} #1.}}%
}
\subjclass[2010]{Primary 11B65, Secondary 20C33}
  \keywords{q-identity, finite general linear group, degenerate Whittaker models}
\begin{document}
\maketitle
\abstract{
The present paper proves a $q$-identity, which arises from a representation $\pi_{N,\psi}$ of $\text{GL}_n(\mathbb{F}_q)$. This identity gives a significant simplification for the dimension of $\pi_{N,\psi}$, which allowed the second author to obtain a description of the representation.
}
\section{Introduction}
In a recent work of the second author \cite{zahi2016}, by taking an irreducible cuspidal representation $\pi$ of $\gl{3n}$, a representation of $\text{GL}_n(\mathbb{F}_q)$, denoted $\pi_{N,\psi}$, has been constructed as a twisted Jacquet module corresponding to certain Whittaker models of $\pi$. A similar construction may be done for an irreducible cuspidal representation $\pi$ of $\gl{2n}$, in which case the description of the corresponding representation of $\gl{n}$ was given in \cite{prasad2000} by Prasad.

In \S \ref{context}, a direct calculation provides a formidable sum for $\dim (\pi_{N,\psi})$:
\begin{equation}\label{dim_comp}
\begin{split}
\dim (\pi_{N,\psi}) =& \frac{1}{q^{3n^{2}}}\sum_{m,k=0}^{n}\frac{q^{kn+\left(n-k\right)m+\frac{k\left(k-1\right)}{2}+\frac{m\left(m-1\right)}{2}}}{\prod_{i=1}^{k} (q^i-1) \prod_{i=1}^{m} (q^i-1)}\left[\sum_{\ell=0}^{n-\mathrm{max}\left\{ k,m\right\} }\left(-1\right)^{\ell}\right.\\
&\left. \cdot{} q^{\frac{\ell\left(\ell-1\right)}{2}}\prod_{i=\ell+1}^{3n-k-\ell-m-1}(q^i-1) \prod_{i=n-k-\ell+1}^{n} (q^i-1) \prod_{i=n-m-\ell+1}^{n} (q^i-1)\right].
\end{split}
\end{equation}
A simplification of the sum was a crucial step in \cite{zahi2016} in order to describe $\pi_{N,\psi}$, similarly to \cite{prasad2000}.
The work of Prasad suggests that $ \mathrm{dim}\left(\pi_{N,\psi}\right)$ should be similar to the dimension of the corresponding space of degenerate Whittaker models in \cite[Lem.~3]{prasad2000}, which was shown to be
\begin{equation*}
\prod_{i=1}^{n-1}(q^n-q^i).
\end{equation*}
Indeed, empirical evidence for $n\le 8$ (obtained by manual evaluation for $n \le 2$ and Maple calculations for $n \le 8$) suggested that
\begin{Theorem}\label{dimval}
	For all positive integers $n$ we have
	\begin{equation}
		\dim (\pi_{N,\psi})=q^{\frac{n(n-1)}{2}}\prod_{i=1}^{n-1}(q^n-q^i).
	\end{equation}
\end{Theorem}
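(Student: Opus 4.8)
The plan is to dismantle the triple sum in \eqref{dim_comp} in three stages: evaluate the inner sum over $\ell$ in closed form, observe that the remaining sum over $(k,m)$ factors as a perfect square, and evaluate the resulting one-variable sum.

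Throughout write $[j]_q!:=\prod_{i=1}^{j}(q^i-1)$, so that $\prod_{i=a+1}^{b}(q^i-1)=[b]_q!/[a]_q!$ and $\binom{a}{b}_q=[a]_q!/([b]_q!\,[a-b]_q!)$, and set $(x;q)_N:=\prod_{i=0}^{N-1}(1-xq^i)$. Collecting the three products in \eqref{dim_comp}, the quantity in square brackets there equals $([n]_q!)^{2}\,T_{k,m}$, where
\[
T_{k,m}:=\sum_{\ell\ge0}(-1)^{\ell}q^{\binom{\ell}{2}}\,\frac{[3n-k-m-\ell-1]_q!}{[\ell]_q!\,[n-k-\ell]_q!\,[n-m-\ell]_q!}.
\]
Put $A=n-k$, $B=n-m$, so $3n-k-m-1=(n-1)+A+B$, and assume by symmetry $A\le B$. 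Using $[\ell]_q!^{-1}[A-\ell]_q!^{-1}=\binom{A}{\ell}_q/[A]_q!$ and $[(n-1)+A+B-\ell]_q!/[B-\ell]_q!=[(n-1)+A]_q!\binom{(n-1)+A+B-\ell}{B-\ell}_q$, and recognising $\binom{(n-1)+A+B-\ell}{B-\ell}_q$ as the coefficient of $x^{B-\ell}$ in $1/(x;q)_{n+A}$, we get
\[
T_{k,m}=\frac{[(n-1)+A]_q!}{[A]_q!}\,[x^B]\Bigl(\frac{1}{(x;q)_{n+A}}\sum_{\ell\ge0}(-1)^{\ell}q^{\binom{\ell}{2}}\binom{A}{\ell}_q x^{\ell}\Bigr).
\]
By the $q$-binomial theorem the inner sum is $(x;q)_A$; since $(x;q)_{n+A}=(x;q)_A(xq^A;q)_n$ and $1/(xq^A;q)_n=\sum_{j\ge0}\binom{n-1+j}{j}_q q^{Aj}x^j$, extracting $[x^B]$ gives
\[
T_{k,m}=q^{(n-k)(n-m)}\,\frac{[2n-1-k]_q!\,[2n-1-m]_q!}{[n-k]_q!\,[n-m]_q!\,[n-1]_q!}.
\]
(Equivalently $T_{k,m}$ is a terminating $q$-Chu--Vandermonde evaluation.)

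Substituting this back into \eqref{dim_comp}, the exponent of the summand collapses to
\[
kn+(n-k)m+\binom{k}{2}+\binom{m}{2}+(n-k)(n-m)=n^2+\binom{k}{2}+\binom{m}{2},
\]
every cross term in $k$ and $m$ cancelling, and the double sum factors:
\[
\dim(\pi_{N,\psi})=\frac{q^{n^2}\,([n]_q!)^{2}}{q^{3n^2}\,[n-1]_q!}\,V^2,\qquad V:=\sum_{k=0}^{n}\frac{q^{\binom{k}{2}}\,[2n-1-k]_q!}{[k]_q!\,[n-k]_q!}.
\]
So the theorem is equivalent to the single-sum identity $V=q^{n(3n-1)/2}[n-1]_q!/[n]_q!$, that is, $\sum_{k=0}^{n}q^{\binom{k}{2}}\binom{n}{k}_q[2n-1-k]_q!=q^{n(3n-1)/2}[n-1]_q!$; feeding it into the last display yields exactly $q^{n(n-1)}[n-1]_q!=q^{n(n-1)/2}\prod_{i=1}^{n-1}(q^n-q^i)$.

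For this last identity I would use $[2n-1-k]_q!=(-1)^{n-k}[n-1]_q!\,(q^n;q)_{n-k}$, reindex $k\mapsto n-k$, and, via $\binom{n-k}{2}=\binom{n}{2}+\binom{k}{2}-k(n-1)$, reduce to
\[
\sum_{k=0}^{n}(-1)^{k}q^{\binom{k}{2}-k(n-1)}\binom{n}{k}_q(q^n;q)_k=q^{n^2}.
\]
Expanding $(q^n;q)_k=\sum_{j}(-1)^{j}q^{\binom{j}{2}+nj}\binom{k}{j}_q$ by the $q$-binomial theorem, interchanging the two summations, and applying $\binom{n}{k}_q\binom{k}{j}_q=\binom{n}{j}_q\binom{n-j}{k-j}_q$, the inner sum over $k$ is once more a $q$-binomial-theorem sum, equal to $\binom{n}{j}_q(-1)^{j}q^{\binom{j}{2}-j(n-1)}(q^{\,j-n+1};q)_{n-j}$; the Pochhammer factor $(q^{\,j-n+1};q)_{n-j}$ contains the term $1-q^{0}$ whenever $j<n$ and therefore vanishes there, so only $j=n$ survives and contributes precisely $q^{n^2}$. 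I expect the main obstacle to be the first stage --- finding the manipulation that puts the inner $\ell$-sum into closed form (here via coefficient extraction against $1/(x;q)_{n+A}$, equivalently the identification of the correct terminating $q$-Vandermonde instance). After that, the collapse of the exponent, the factorisation into $V^2$, and the concluding identity (two applications of the $q$-binomial theorem plus the vanishing-factor argument) are essentially forced, and the only remaining care is bookkeeping of summation ranges and $q$-factorial conventions.
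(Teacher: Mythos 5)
Your proposal is correct, and it takes a genuinely different route from the paper. The paper first recasts \eqref{dim_comp} in $(q;q)$-Pochhammer form (Lemma \ref{simplem}) and then, for each fixed $k$, evaluates the \emph{double} sum over $(m,\ell)$ in one stroke (Lemma \ref{lem1}): the sum is read off as the coefficient of $x^n$ in a product of three infinite $q$-Pochhammer series coming from \eqref{eq:f2}--\eqref{eq:f3}, which telescopes to $(q^{k+n}x;q)_\infty$; the surviving single sum over $k$ is then handled by the same coefficient-extraction trick a second time. You instead evaluate the innermost $\ell$-sum first, as a terminating $q$-Chu--Vandermonde (your closed form $T_{k,m}=q^{(n-k)(n-m)}[2n-1-k]_q![2n-1-m]_q!/([n-k]_q![n-m]_q![n-1]_q!)$ is correct --- I checked it against several small cases and your coefficient-extraction derivation is sound), and this produces the pleasant observation the paper never sees: the exponent collapses to $n^2+\binom{k}{2}+\binom{m}{2}$, so the remaining $(k,m)$-sum separates as a perfect square $V^2$, reducing everything to one single-variable identity, which you settle with two applications of the finite $q$-binomial theorem and the vanishing of $(q^{j-n+1};q)_{n-j}$ for $j<n$ (your reduction constants $q^{n(3n-1)/2}$ and $q^{n^2}$ check out). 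What each approach buys: yours uses only terminating identities, exposes the hidden product structure of the double sum, and arguably explains why the answer is a simple product; the paper's route avoids having to notice the factorization by absorbing $m$ and $\ell$ simultaneously into one generating-function computation, which makes for a shorter write-up once the infinite-product formalism of \eqref{eq:f2}--\eqref{eq:f3} is set up. One trivial bookkeeping remark: the factor $\binom{n}{j}_q$ in your final step is pulled out of the inner $k$-sum by $\binom{n}{k}_q\binom{k}{j}_q=\binom{n}{j}_q\binom{n-j}{k-j}_q$ rather than produced by it, but this does not affect the argument.
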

Theorem \ref{dimval} is an immediate consequence of \eqref{dim_comp} and Theorem \ref{thm1}, proved in \S \ref{proof}.
\begin{Theorem}\label{thm1}
Let $q$ be a prime power, and let $n$ be a positive integer. Then

\begin{equation}\label{ziden}
\begin{split}
q^{\frac{n(n-1)}{2}} \prod_{i=1}^{n-1} (q^n - q^i) =& \frac{1}{q^{3n^{2}}}\sum_{m,k=0}^{n}\frac{q^{kn+\left(n-k\right)m+\frac{k\left(k-1\right)}{2}+\frac{m\left(m-1\right)}{2}}}{\prod_{i=1}^{k} (q^i-1) \prod_{i=1}^{m} (q^i-1)}\left[\sum_{\ell=0}^{n-\mathrm{max}\left\{ k,m\right\} }\left(-1\right)^{\ell}\right.\\
&\left. \cdot q^{\frac{\ell\left(\ell-1\right)}{2}}\prod_{i=\ell+1}^{3n-k-\ell-m-1}(q^i-1) \prod_{i=n-k-\ell+1}^{n} (q^i-1) \prod_{i=n-m-\ell+1}^{n} (q^i-1)\right].
\end{split}
\end{equation}

\end{Theorem}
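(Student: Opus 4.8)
The strategy is to rewrite both sides in terms of the $q$-Pochhammer symbols $\langle j\rangle:=\prod_{i=1}^{j}(q^{i}-1)$ and $(x;q)_{j}:=\prod_{i=0}^{j-1}(1-xq^{i})$ and the Gaussian binomial $\binom{n}{k}_{q}=\langle n\rangle/(\langle k\rangle\langle n-k\rangle)$, and to collapse the triple sum in two stages, always with the convention that a term containing $1/\langle j\rangle$ with $j<0$ is $0$. First one notes that the left-hand side of \eqref{ziden} equals $q^{n(n-1)}\langle n-1\rangle$, since $\prod_{i=1}^{n-1}(q^{n}-q^{i})=q^{\binom n2}\prod_{i=1}^{n-1}(q^{n-i}-1)$. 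Next, in Step~1 one evaluates the inner $\ell$-sum. Letting $S(k,m)$ denote the bracketed sum over $\ell$ in \eqref{ziden} and rewriting each product as a ratio of $\langle\cdot\rangle$'s gives
\[
S(k,m)=\langle n\rangle^{2}\sum_{\ell\ge 0}(-1)^{\ell}q^{\binom{\ell}{2}}\frac{\langle 3n-k-m-\ell-1\rangle}{\langle\ell\rangle\,\langle n-k-\ell\rangle\,\langle n-m-\ell\rangle}.
\]
Converting to $(x;q)$-notation via $(q;q)_{a}/(q;q)_{a-\ell}=(-1)^{\ell}q^{a\ell-\binom{\ell}{2}}(q^{-a};q)_{\ell}$ and setting $a=n-k$, $b=n-m$, $c=3n-k-m-1=n+a+b-1$, the remaining sum becomes a constant multiple of the terminating basic hypergeometric series ${}_{2}\phi_{1}(q^{-a},q^{-b};q^{-c};q,q^{1-n})$, and the argument $q^{1-n}$ is exactly $q^{-c}q^{b}/q^{-a}$, so the terminating $q$-Chu--Vandermonde summation applies. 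Simplifying the resulting closed form yields
\[
S(k,m)=q^{(n-k)(n-m)}\,\langle n\rangle^{2}\,\frac{\langle 2n-k-1\rangle\,\langle 2n-m-1\rangle}{\langle n-k\rangle\,\langle n-m\rangle\,\langle n-1\rangle},
\]
which is manifestly symmetric in $k\leftrightarrow m$, as it must be. (This evaluation can alternatively be obtained from scratch by the method of Step~3.)

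Step~2 is the factorization. Substituting $S(k,m)$ back, the exponent of $q$ in the $(k,m)$-summand collapses, $kn+(n-k)m+\binom k2+\binom m2+(n-k)(n-m)=n^{2}+\binom k2+\binom m2$, so the right-hand side of \eqref{ziden} equals
\[
\frac{\langle n\rangle^{2}}{q^{2n^{2}}\,\langle n-1\rangle}\;U_{n}^{2},\qquad U_{n}:=\sum_{k=0}^{n}\frac{q^{\binom k2}\,\langle 2n-k-1\rangle}{\langle k\rangle\,\langle n-k\rangle},
\]
the double sum separating into $U_{n}^{2}$ times the single leftover factor $1/\langle n-1\rangle$ because $\langle n-1\rangle$ occurs to the first power in $S(k,m)$. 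Comparing with the value $q^{n(n-1)}\langle n-1\rangle$ of the left-hand side, Theorem~\ref{thm1} will follow once we show $U_{n}=q^{n(3n-1)/2}/(q^{n}-1)$. Multiplying this through by $q^{n}-1=\langle n\rangle/\langle n-1\rangle$ and using $\langle 2n-k-1\rangle/\langle n-1\rangle=\prod_{j=0}^{n-k-1}(q^{n+j}-1)$, it becomes the polynomial identity
\[
\sum_{k=0}^{n}q^{\binom k2}\binom{n}{k}_{q}\prod_{j=0}^{n-k-1}(q^{n+j}-1)=q^{n(3n-1)/2}.
\]

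Step~3 proves this last identity. Reindex $k\mapsto n-k$, write $\prod_{j=0}^{k-1}(q^{n+j}-1)=(-1)^{k}(q^{n};q)_{k}$, and extract $q^{\binom n2}$ using $\binom{n-k}{2}=\binom n2-k(n-1)+\binom k2$; the identity becomes $W_{n}:=\sum_{k\ge 0}(-1)^{k}q^{\binom k2+k(1-n)}\binom{n}{k}_{q}(q^{n};q)_{k}=q^{n^{2}}$. Now expand $(q^{n};q)_{k}=\sum_{l}(-1)^{l}q^{\binom l2+nl}\binom{k}{l}_{q}$ by the $q$-binomial theorem, interchange the two summations, apply the subset-of-a-subset identity $\binom{n}{k}_{q}\binom{k}{l}_{q}=\binom{n}{l}_{q}\binom{n-l}{k-l}_{q}$, and substitute $k=l+t$. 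The inner sum over $t$ is $\sum_{t}(-1)^{t}q^{\binom t2}\binom{n-l}{t}_{q}(q^{l+1-n})^{t}=(q^{\,l+1-n};q)_{n-l}$, again by the $q$-binomial theorem, and this vanishes for every $l<n$ because the product contains the factor $1-q^{0}$, while for $l=n$ it is the empty product. After collecting $q$-powers ($2\binom l2+l=l^{2}$), only the $l=n$ term survives, giving $q^{n^{2}}$, which proves the identity and hence the theorem.

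The main obstacle is Step~1: one must massage the $\ell$-sum into a terminating ${}_{2}\phi_{1}$ whose argument meets the hypothesis of the $q$-Chu--Vandermonde summation -- the sign and exponent bookkeeping in passing between $\langle\cdot\rangle$ and $(x;q)$ is where slips are easy -- and then observe that upon back-substitution the exponents conspire so that the $(k,m)$-sum becomes a perfect square. Granting those two points, Steps~2 and~3 are routine $q$-binomial manipulations.
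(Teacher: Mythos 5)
Your proposal is correct, and I checked its key claims (the closed form for the inner $\ell$-sum, the exponent collapse $kn+(n-k)m+\binom{k}{2}+\binom{m}{2}+(n-k)(n-m)=n^2+\binom{k}{2}+\binom{m}{2}$, and the evaluations $U_n=q^{n(3n-1)/2}/(q^n-1)$, $W_n=q^{n^2}$) both structurally and on small cases; the terminating $q$-Chu--Vandermonde does apply with terminating parameter $q^{-(n-m)}$ and argument $q^{1-n}=q^{-c}q^{b}/q^{-a}$ exactly as you say. However, your route is genuinely different from the paper's. After the same preliminary conversion to $q$-Pochhammer form, the paper does not touch the $\ell$-sum in isolation: it fixes $k$ (after replacing $k$ by $n-k$) and evaluates the entire inner double sum over $(m,\ell)$ in one stroke (Lemma \ref{lem1}), by recognizing it as the coefficient of $x^n$ in a product of three series --- Euler's series \eqref{eq:f2} and two instances of the $q$-binomial series \eqref{eq:f3} --- whose infinite products telescope to $(q^{k+n}x;q)_\infty$; the leftover single sum over $k$ is then dispatched by the very same coefficient-extraction trick. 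Your decomposition instead peels off the $\ell$-sum by Chu--Vandermonde, which exposes the $k\leftrightarrow m$ symmetry and the pleasant fact that the remaining double sum is a perfect square $U_n^2$ times $1/\langle n-1\rangle$, reducing everything to one single-sum identity proved by finite $q$-binomial manipulations. What each buys: the paper's argument needs only the two classical generating-function identities and avoids any hypergeometric summation theorem, while yours isolates a clean intermediate closed form $S(k,m)$ and a square structure that the paper's grouping never makes visible, at the price of heavier sign and exponent bookkeeping in the $\langle\cdot\rangle\leftrightarrow(x;q)$ translation.
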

Let $(q;q)_n = \prod_{i=1}^{n} (1-q^i)$. As we prove in Lemma \ref{lem1}, the following identity is equivalent to \eqref{ziden}, while having the advantage of being more compact:
\begin{equation}\label{zidennicer}
\begin{split}
\frac{q^{4n^2-n}}{1-q^n} = \sum\limits_{\substack{ m,k,\ell: \\ 0 \le m,k \le n \\ 0\le \ell \le n-\max \{k,m\}}}& q^{n(k+m) -km + \binom{k}{2} + \binom{m}{2} + \binom{\ell}{2}} (-1)^{k+m+\ell}\\&\cdot \frac{(q;q)_{3n-k-\ell-m-1}(q;q)_n }{(q;q)_k (q;q)_m (q;q)_{\ell}(q;q)_{n-k-\ell} (q;q)_{n-m-\ell}}.
\end{split}
\end{equation}

\section{Proof of Theorem \ref{thm1}}\label{proof}
Note the for a fixed $n$, both sides of \eqref{ziden} are rational functions of $q$ with coefficients in $\mathbb{Q}$. Two rational functions that coincide on all prime powers, must be formally identical. Hence, it suffices to establish identity \eqref{ziden} as a formal identity in variable $q$. This is the approach we take.

We introduce two standard pieces of notation that are indispensable for us - the q-Pochhammer symbol and its infinite variant:
\begin{equation}\label{qsym}
(a;q)_n = \prod_{k=0}^{n-1} (1-aq^k), \qquad (a;q)_{\infty} = \prod_{k=0}^{\infty} (1-aq^k).
\end{equation}
The expression $(a;q)_n$ is defined for any non-negative integer $n$ (with $(a;q)_0=1$).  For $(a;q)_{\infty}$ to make sense, one may consider it as a formal power series in variable $q$.

\subsection{Auxiliary Lemmas}
\begin{seclem}\label{simplem}
	Fix $n \ge 1$. Identity \eqref{ziden} is equivalent to
	\begin{equation}\label{simplif}
	\begin{split}
	\frac{q^{4n^2-n}}{1-q^n} =\sum_{\substack{ m,k,\ell: \\ 0 \le m,k \le n \\ 0\le \ell \le n- \max \{ k,m \}}}& q^{n(k+m) -km + \binom{k}{2} + \binom{m}{2} + \binom{\ell}{2}} (-1)^{k+m+\ell}\\&\cdot \frac{(q;q)_{3n-k-\ell-m-1}(q;q)_n }{(q;q)_k (q;q)_m (q;q)_{\ell}(q;q)_{n-k-\ell} (q;q)_{n-m-\ell}}.
	\end{split}
	\end{equation}
\end{seclem}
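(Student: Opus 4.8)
The plan is to obtain \eqref{simplif} from \eqref{ziden} by a chain of reversible rewritings, so that the two identities are literally equivalent; all the work is careful accounting of powers of $q$ and of signs. The single tool is the conversion of a product $\prod_{i=a}^{b}(q^i-1)$ into $q$-Pochhammer symbols: starting from $\prod_{i=1}^{b}(q^i-1)=(-1)^{b}(q;q)_b$ one gets $\prod_{i=a}^{b}(q^i-1)=(-1)^{b-a+1}(q;q)_b/(q;q)_{a-1}$ whenever $1\le a\le b+1$. Before invoking this I would record that every product range occurring in \eqref{ziden} is admissible and that every subscript appearing is a non-negative integer; this is exactly where the summation constraint $0\le\ell\le n-\max\{k,m\}$ is used, since it guarantees $n-k-\ell\ge 0$, $n-m-\ell\ge 0$ and $3n-k-\ell-m-1\ge\ell\ge 0$.

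First I would rewrite the left-hand side of \eqref{ziden}. Factoring $q^n-q^i=q^i(q^{n-i}-1)$ and using $\sum_{i=1}^{n-1}i=\binom{n}{2}$ gives $\prod_{i=1}^{n-1}(q^n-q^i)=q^{\binom{n}{2}}(-1)^{n-1}(q;q)_{n-1}$, so the whole left side equals $q^{n(n-1)}(-1)^{n-1}(q;q)_{n-1}$, and then I would substitute $(q;q)_{n-1}=(q;q)_n/(1-q^n)$. Next I would rewrite the inner summand of \eqref{ziden}: applying the dictionary to the five products $\prod_{i=1}^{k}$, $\prod_{i=1}^{m}$, $\prod_{i=\ell+1}^{3n-k-\ell-m-1}$, $\prod_{i=n-k-\ell+1}^{n}$, $\prod_{i=n-m-\ell+1}^{n}$ turns the summand into
\[
q^{kn+(n-k)m+\binom{k}{2}+\binom{m}{2}+\binom{\ell}{2}}\,(-1)^{\varepsilon}\,\frac{(q;q)_{3n-k-\ell-m-1}\,(q;q)_n^{2}}{(q;q)_k\,(q;q)_m\,(q;q)_{\ell}\,(q;q)_{n-k-\ell}\,(q;q)_{n-m-\ell}},
\]
where the contributing sign exponents are $k$, $m$, $n+k+m-1$, $k+\ell$, $m+\ell$ together with the $(-1)^{\ell}$ already present in the sum; these add up modulo $2$ to $n-1+k+m+\ell$, so the total sign is $(-1)^{n-1}(-1)^{k+m+\ell}$. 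Multiplying \eqref{ziden} through by $q^{3n^{2}}$, the common factor $(-1)^{n-1}(q;q)_n$ then cancels on both sides, $3n^{2}+n(n-1)=4n^{2}-n$ pins down the power of $q$ on the left, and $kn+(n-k)m=n(k+m)-km$ matches the exponent on the right; what is left over is exactly \eqref{simplif}.

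The only spot where a slip could occur is the consolidation of the six sign factors, so I would guard against it by testing the resulting identity against the equivalent compact form \eqref{zidennicer} quoted in the introduction and by checking the $n=1$ case by hand. With that verification in place there is no genuine obstacle: the lemma is purely a change of notation, and the real difficulty of Theorem \ref{thm1} is confined to establishing \eqref{simplif} itself.
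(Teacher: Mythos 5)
Your proposal is correct and follows essentially the same route as the paper's own proof: converting each product in \eqref{ziden} into $q$-Pochhammer symbols, tracking the signs (which indeed consolidate to $(-1)^{n-1}(-1)^{k+m+\ell}$), multiplying through by $q^{3n^2}$, and cancelling the common factor $(-1)^{n-1}(q;q)_n$ using $3n^2+n(n-1)=4n^2-n$. The bookkeeping matches the paper's list of simplifications, so there is nothing to add.
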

\begin{proof}
	We list below expressions appearing in  \eqref{ziden}, and their simplification using \eqref{qsym} and algebraic manipulations.
	\begin{equation}\label{list}
	\begin{split}
	q^{\frac{n(n-1)}{2}} &= q^{\binom{n}{2}}, \\
	\prod_{i=1}^{n-1} (q^n - q^i) &= (-1)^{n-1} (q;q)_{n-1} \cdot q^{\binom{n}{2}},\\
	q^{kn + (n-k)m} q^{\frac{k(k-1)}{2}} q^{\frac{m(m-1)}{2}} &= q^{n(k+m)-km + \binom{k}{2} +\binom{m}{2}},\\
	\frac{1}{\prod_{i=1}^{k} (q^i-1) \prod_{i=1}^{m} (q^i-1)} &= \frac{(-1)^{k+m}}{(q;q)_k (q;q)_m}, \\
	q^{\frac{\ell(\ell-1)}{2}} & = q^{\binom{\ell}{2}},\\
	\prod_{i=\ell+1}^{3n-k-\ell-m-1} (q^i-1) &= (-1)^{n+k+m+1} \frac{(q;q)_{3n-k-\ell-m-1}}{(q;q)_{\ell}},\\
	\prod_{i=n-k-\ell+1}^{n} (q^i-1) &= (-1)^{k+\ell} \frac{(q;q)_n}{(q;q)_{n-k-\ell}},\\
	\prod_{i=n-m-\ell+1}^{n} (q^i-1) &= (-1)^{m+\ell} \frac{(q;q)_n}{(q;q)_{n-m-\ell}}.
	\end{split}
	\end{equation}
	Plugging \eqref{list} in \eqref{ziden}, we find that \eqref{ziden} is equivalent to
	\begin{equation}\label{aftersimp}
	\begin{split}
	q^{\binom{n}{2}} (-1)^{n-1} (q;q)_{n-1} \cdot  q^{\binom{n}{2}} =& \frac{1}{q^{3n^2}} \sum_{m,k=0}^{n} q^{n(k+m)-km + \binom{k}{2} + \binom{m}{2}} \frac{(-1)^{k+m}}{(q;q)_k (q;q)_m}  \\
	& \cdot \left[\sum_{\ell=0}^{n-\max \{k,m \}} (-1)^{\ell} q^{\binom{\ell}{2}} (-1)^{n+k+m+1}  \frac{(q;q)_{3n-k-\ell-m-1}}{(q;q)_{\ell}} \right.\\
	&\left.\qquad \qquad \cdot (-1)^{k+\ell} \frac{(q;q)_n}{(q;q)_{n-k-\ell}} (-1)^{m+\ell} \frac{(q;q)_n}{(q;q)_{n-m-\ell}} \right].
	\end{split}
	\end{equation}
	We multiply both sides of \eqref{aftersimp} by $q^{3n^2}$, divide by $(-1)^{n-1}(q;q)_{n} = (-1)^{n-1}(q;q)_{n-1} (1-q^n)$ and arrive at
	\begin{equation}\label{outerinner}
	\begin{split}
	\frac{q^{3n^2 + 2\binom{n}{2}}}{1-q^n} &= \sum_{m,k=0}^{n} q^{n(k+m) -km+ \binom{k}{2} + \binom{m}{2}} \frac{(-1)^{k+m}}{(q;q)_k (q;q)_m} \\
	& \qquad \cdot \left[\sum_{\ell=0}^{n- \max \{k,m \} } (-1)^{\ell} q^{\binom{\ell}{2}}  \frac{(q;q)_{3n-k-\ell-m-1}}{(q;q)_{\ell}}  \frac{(q;q)_n}{(q;q)_{n-k-\ell} (q;q)_{n-m-\ell}}\right].
	\end{split}
	\end{equation}
	Noting that $3n^2+2\binom{n}{2}=4n^2-n$ and writing the RHS of \eqref{outerinner} as a triple sum  over $m$, $k$ and $\ell$, we find that \eqref{outerinner} is equivalent to \eqref{simplif}, as needed.
\end{proof}

The following lemma is a crucial step in the proof of Theorem \ref{thm1}. We have arrived at it while looking for a nice expression for the RHS of \eqref{outerinner} with $k$ fixed. The Mathematica package "qMultiSum" \cite{riese2003}, developed by the Research Institute for Symbolic Computation, is able to find recurrences for certain $q$-sums. This package, developed and supported by Ralf Hemmecke, Peter Paule and Axel Riese, returned a short recursion, which gave us confidence in this direction. 

\begin{seclem}\label{lem1}
	Let $n$, $k$ be integers with $n \ge 1$ and $0 \le k \le n$. One has
	\begin{equation}\label{innerml}
	\begin{split}
	\sum_{\substack{m,\ell: \\ 0 \le m \le n \\ 0 \le \ell \le \min \{k,n-m \} }}& q^{mk + \binom{m}{2} + \binom{\ell}{2}} (-1)^{m+\ell} \frac{(q;q)_{2n+k-\ell-m-1}(q;q)_n }{ (q;q)_m (q;q)_{\ell}(q;q)_{k-\ell} (q;q)_{n-m-\ell}}\\& = (q^{k+1};q)_{n-1} \cdot (q^{k+n})^n (-1)^n q^{\binom{n}{2}}. 
	\end{split}
	\end{equation}
\end{seclem}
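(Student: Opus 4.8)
The plan is to evaluate the left side of \eqref{innerml} in closed form by two successive applications of $q$-Chu--Vandermonde summation, carrying out the $\ell$-summation first (with $m$ fixed) and then the $m$-summation. Write $\binom{a}{b}_q=(q;q)_a/\bigl((q;q)_b(q;q)_{a-b}\bigr)$ for the Gaussian binomial coefficient, and recall the two standard identities $(x;q)_N=\sum_{j\ge0}(-1)^j q^{\binom{j}{2}}\binom{N}{j}_q x^j$ (a polynomial identity) and $1/(x;q)_N=\sum_{j\ge0}\binom{N+j-1}{j}_q x^j$ (an identity of formal power series in $x$).

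\emph{Step 1: collapsing the $\ell$-sum.} Using \eqref{qsym} one writes $1/\bigl((q;q)_\ell(q;q)_{n-m-\ell}\bigr)=\binom{n-m}{\ell}_q/(q;q)_{n-m}$ and $(q;q)_{2n+k-\ell-m-1}/(q;q)_{k-\ell}=\binom{2n+k-m-1-\ell}{2n-m-1}_q\,(q;q)_{2n-m-1}$, so that the summand of \eqref{innerml} becomes $q^{mk+\binom{m}{2}}(-1)^m\dfrac{(q;q)_n(q;q)_{2n-m-1}}{(q;q)_m(q;q)_{n-m}}$ multiplied by $(-1)^\ell q^{\binom{\ell}{2}}\binom{n-m}{\ell}_q\binom{k+2n-m-1-\ell}{2n-m-1}_q$. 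Summing the second factor over $\ell$ is an instance of the elementary evaluation
\[
\sum_{\ell\ge0}(-1)^\ell q^{\binom{\ell}{2}}\binom{b}{\ell}_q\binom{c-\ell}{d}_q=q^{\,b(c-d)}\binom{c-b}{d-b}_q\qquad(d\ge b-1),
\]
which I would prove immediately by substituting $\binom{c-\ell}{d}_q=[x^{c-d-\ell}]\,1/(x;q)_{d+1}$, interchanging the finite sum with the coefficient extraction, recognizing $\sum_\ell(-1)^\ell q^{\binom{\ell}{2}}\binom{b}{\ell}_q x^\ell=(x;q)_b$, simplifying $(x;q)_b/(x;q)_{d+1}=1/(xq^b;q)_{d+1-b}$, and extracting the coefficient of $x^{c-d}$ once more. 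With $b=n-m$, $c=k+2n-m-1$, $d=2n-m-1$ (so $c-d=k$, $d-b=n-1$, $c-b=k+n-1$), this turns the $\ell$-sum into $q^{(n-m)k}\binom{n+k-1}{n-1}_q$, and hence the left side of \eqref{innerml} equals $(q;q)_n\binom{n+k-1}{n-1}_q\,q^{nk}\cdot W$, where $W=\sum_{m=0}^{n}(-1)^m q^{\binom{m}{2}}(q;q)_{2n-m-1}/\bigl((q;q)_m(q;q)_{n-m}\bigr)$.

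\emph{Step 2: the $m$-sum and assembly.} Writing $(q;q)_{2n-m-1}=(q;q)_{n-1}(q^n;q)_{n-m}$ and $1/\bigl((q;q)_m(q;q)_{n-m}\bigr)=\binom{n}{m}_q/(q;q)_n$ gives $W=(1-q^n)^{-1}\sum_m(-1)^m q^{\binom{m}{2}}\binom{n}{m}_q(q^n;q)_{n-m}$. Expanding $(q^n;q)_{n-m}$ by the $q$-binomial theorem, interchanging the two finite sums, and using $\binom{n}{m}_q\binom{n-m}{j}_q=\binom{n}{j}_q\binom{n-j}{m}_q$, the inner $m$-sum is $\sum_m(-1)^m q^{\binom{m}{2}}\binom{n-j}{m}_q=(1;q)_{n-j}$, which vanishes unless $j=n$; only the term $j=n$ survives and yields $W=(-1)^n q^{\,n^2+\binom{n}{2}}/(1-q^n)$. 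Finally $(q;q)_n\binom{n+k-1}{n-1}_q=(1-q^n)(q^{k+1};q)_{n-1}$ because $(q;q)_{n+k-1}/(q;q)_k=(q^{k+1};q)_{n-1}$, so the left side of \eqref{innerml} equals $(q^{k+1};q)_{n-1}q^{\,nk+n^2+\binom{n}{2}}(-1)^n=(q^{k+1};q)_{n-1}(q^{k+n})^n(-1)^n q^{\binom{n}{2}}$, which is precisely the right side.

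The only genuine obstacle is the bookkeeping in Step 1: one has to split each $q$-Pochhammer product appearing in \eqref{innerml} into exactly the right Gaussian binomial coefficients so that the $\ell$-summation reduces to a clean $q$-Vandermonde identity and the residual $m$-summation is again classical. Once the displayed identity of Step 1 has been isolated and the vanishing $(1;q)_{n-j}=0$ for $0\le j<n$ has been noticed, the remainder is routine manipulation of Gaussian binomial coefficients with no deeper ingredient.
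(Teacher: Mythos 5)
Your argument is correct, and it reaches \eqref{innerml} by a genuinely different route than the paper. The paper's proof rewrites $(q;q)_{2n+k-\ell-m-1}/(q;q)_{k-\ell}$ so as to introduce $(q^{-k};q)_{\ell}$ (which both encodes the constraint $\ell\le k$ and lets the sum run over all $m+\ell\le n$), pulls out $(q;q)_n(q^{k+1};q)_{n-1}$, and then identifies the remaining double sum as the coefficient of $x^n$ in a product of three series built from \eqref{eq:f2} and \eqref{eq:f3} that telescopes to $(q^{k+n}x;q)_{\infty}$ -- one coefficient extraction finishes everything, treating $m$, $\ell$ and $n-m-\ell$ on an equal footing. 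You instead sum sequentially: first the $\ell$-sum via the $q$-Chu--Vandermonde-type evaluation $\sum_{\ell}(-1)^{\ell}q^{\binom{\ell}{2}}\binom{b}{\ell}_q\binom{c-\ell}{d}_q=q^{b(c-d)}\binom{c-b}{d-b}_q$ (your coefficient-extraction proof of it is sound, and in the application $d-b=n-1\ge0$, so no boundary convention issues arise), then the $m$-sum via the finite $q$-binomial theorem and the vanishing $(1;q)_{n-j}=0$ for $j<n$. The substitutions in your Step 1 check out, including the exponent bookkeeping $q^{mk}q^{(n-m)k}=q^{nk}$, and the final assembly $(q;q)_n\binom{n+k-1}{n-1}_q=(1-q^n)(q^{k+1};q)_{n-1}$ is right; the only point left tacit is that replacing the original range $0\le\ell\le\min\{k,n-m\}$ by an unrestricted $\ell$-sum is harmless because $\binom{n-m}{\ell}_q$ or $\binom{2n+k-m-1-\ell}{2n-m-1}_q$ vanishes for the extra terms (the paper achieves the same extension via $(q^{-k};q)_{\ell}=0$ for $\ell>k$). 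What each approach buys: the paper's single telescoping convolution is shorter once the key rewrite is spotted and explains why the answer is so clean; your iterated method is more mechanical, stays within finite Gaussian-binomial identities apart from the small generating-function lemma, avoids the negative power $q^{-k}$, and produces a usable intermediate closed form $q^{(n-m)k}\binom{n+k-1}{n-1}_q$ for the $\ell$-sum.
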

\begin{proof}
	Note that  
	\begin{equation}\label{idenfrac}
	\begin{split}
	\frac{(q;q)_{2n+k-\ell-m-1}}{ (q;q)_{k-\ell}} &= \frac{(q;q)_k}{(q;q)_{k-\ell}} \frac{(q;q)_{2n+k-\ell-m-1}}{(q;q)_k} = \prod_{i=k-\ell+1}^{k} (1-q^i) \prod_{i=k+1}^{2n+k-\ell-m-1} (1-q^i) \\
	&= (-1)^{\ell} q^{(k-\ell+1)+(k-\ell) + \cdots + k} \prod_{i=k-\ell+1}^{k} (1-q^{-i}) \cdot (q^{k+1};q)_{2n-\ell-m-1} \\
	&= (-1)^{\ell} q^{k \cdot \ell - \binom{\ell}{2}} (q^{-k};q)_{\ell} \cdot (q^{k+1};q)_{2n-\ell-m-1} \\  
	&= (-1)^{\ell} (q^k)^{\ell} q^{- \binom{\ell}{2}} (q^{-k};q)_{\ell} \cdot (q^{k+1};q)_{2n-\ell-m-1}.
	\end{split}
	\end{equation}
	Using \eqref{idenfrac}, the LHS of \eqref{innerml} becomes
	\begin{equation}\label{beforetrans}
	(q;q)_n  \sum_{\substack{m,\ell: \\ 0 \le m \le n \\ 0 \le \ell \le \min \{ k,n-m \} }} (q^k)^{m+\ell} (-1)^m q^{\binom{m}{2}} \frac{(q^{-k};q)_{\ell} (q^{k+1};q)_{2n-\ell-m-1}}{(q;q)_m (q;q)_{\ell} (q;q)_{n-m-\ell}}.
	\end{equation}
	Since $(q^{k+1};q)_{2n-\ell-m-1}=(q^{k+1};q)_{n-1} (q^{k+n};q)_{n-m-\ell}$, we may rewrite \eqref{beforetrans} as
	\begin{equation}\label{aftertrans2}
	(q;q)_n (q^{k+1};q)_{n-1}  \sum_{\substack{m,\ell: \\ 0 \le m \le n \\ 0 \le \ell \le \min \{k,n-m\} }} \frac{(-q^k)^m q^{\binom{m}{2}}}{(q;q)_m} \cdot  \frac{(q^{-k};q)_{\ell} (q^k)^{\ell}}{(q;q)_{\ell}} \cdot \frac{(q^{k+n};q)_{n-m-\ell}}{(q;q)_{n-m-\ell}}.
	\end{equation}
	Since $(q^{-k};q)_{\ell} = 0$ when $\ell$ is greater than $k$, we may extend the summation in \eqref{aftertrans2} as follows, without changing the value of the sum:
	\begin{equation}\label{aftertrans}
	(q;q)_n (q^{k+1};q)_{n-1}  \sum_{0 \le m+\ell \le n} \frac{(-q^k)^m q^{\binom{m}{2}}}{(q;q)_m} \cdot  \frac{(q^{-k};q)_{\ell} (q^k)^{\ell}}{(q;q)_{\ell}} \cdot \frac{(q^{k+n};q)_{n-m-\ell}}{(q;q)_{n-m-\ell}}.
	\end{equation}
	
	We recall two classical q-identities:
	\begin{align}
	\label{eq:f2} (x;q)_{\infty} &= \sum_{n=0}^{\infty} \frac{(-1)^n q^{\binom{n}{2}} x^n}{(q;q)_n}, \\
	\label{eq:f3}\frac{(ax;q)_{\infty}}{(x;q)_{\infty}} &= \sum_{n=0}^{\infty} \frac{(a;q)_n}{(q;q)_n} x^n.
	\end{align}
	For our purposes, it is enough to consider these two series as formal series in $x$. Identity \eqref{eq:f2} is due to Euler, and it may be shown to follow from \eqref{eq:f3}, which is known as the "$q$-Binomial Identity". See equations (19), (20) in the Forward of \cite{gasper2004}.
	
	Replacing $x$ with $q^kx$ in \eqref{eq:f2}, we find that
	\begin{equation}
	\sum_{m} \frac{(-q^k)^m q^{\binom{m}{2}}  x^m}{(q;q)_m} = (q^k x;q)_{\infty}.
	\end{equation}
	Plugging $a=q^{-k}$ and replacing $x$ with $q^k x$ in \eqref{eq:f3}, we find that
	\begin{equation}
	\sum_{\ell} \frac{(q^{-k},q)_{\ell} (q^k)^{\ell} x^{\ell}}{(q;q)_{\ell}} = \frac{(x;q)_{\infty}}{(q^k x;q)_{\infty}}.
	\end{equation}
	Plugging $a=q^{k+n}$ in \eqref{eq:f3}, we find that
	\begin{equation}
	\sum_{i} \frac{(q^{k+n};q)_{i} x^i}{(q;q)_{i}} = \frac{(q^{k+n} x;q)_{\infty}}{(x;q)_{\infty}}.
	\end{equation}
	Hence, \eqref{aftertrans} may be written as $(q;q)_n (q^{k+1};q)_{n-1}$ times the coefficient of $x^n$ in the following product:
	\begin{equation}
	(q^k x;q)_{\infty} \cdot \frac{(x;q)_{\infty}}{(q^k x;q)_{\infty}}  \frac{(q^{k+n} x;q)_{\infty}}{(x;q)_{\infty}} \cdot {(x;q)_{\infty}} = (q^{k+n}x;q)_{\infty}.
	\end{equation}
	We use \eqref{eq:f2} again, to conclude that this coefficient is $(q^{k+n})^n \frac{(-1)^n q^{\binom{n}{2}}}{(q;q)_n}$, and so \eqref{aftertrans} (the LHS of \eqref{innerml}) equals to
	\begin{equation}
	(q;q)_n (q^{k+1};q)_{n-1} \cdot (q^{k+n})^n \frac{(-1)^n q^{\binom{n}{2}}}{(q;q)_n} = (q^{k+1};q)_{n-1} \cdot (q^{k+n})^n (-1)^n q^{\binom{n}{2}},
	\end{equation}
	which is the RHS of \eqref{innerml}. This concludes the proof of Lemma \ref{lem1}.
\end{proof}
\subsection{Conclusion of Proof}
Fix $n \ge 1$. By Lemma \ref{simplem}, Theorem \ref{thm1} follows once we establish \eqref{simplif}. We separate the summation in the RHS of \eqref{simplif} into an outer summation over $k$ and inner summation over $m,\ell$, and so \eqref{simplif} becomes
\begin{align}\label{outerinner2}
\begin{split}
\frac{q^{4n^2-n}}{1-q^n} = \sum_{k=0}^{n} \frac{ (-1)^k q^{k n + \binom{k}{2}}}{(q;q)_k} \sum_{\substack{m,\ell: \\ 0 \le m \le n \\ 0 \le \ell \le n- \max \{k,m \} }} & q^{m(n-k) + \binom{m}{2} + \binom{\ell}{2}} (-1)^{m+\ell}\\&\cdot \frac{(q;q)_{3n-k-\ell-m-1}(q;q)_n }{ (q;q)_m (q;q)_{\ell}(q;q)_{n-k-\ell} (q;q)_{n-m-\ell}}.
\end{split}
\end{align}

We replace $k$ with $n-k$ in \eqref{outerinner2}:
\begin{align}\label{eq:simplifk}
\begin{split}
\frac{q^{4n^2-n}}{1-q^n} = \sum_{k=0}^{n} \frac{ (-1)^{n-k} q^{(n-k) n + \binom{n-k}{2}}}{(q;q)_{n-k}} \sum_{\substack{m,\ell: \\ 0 \le m \le n \\ 0 \le \ell \le \min \{k,n-m\} }}& q^{mk + \binom{m}{2} + \binom{\ell}{2}} (-1)^{m+\ell}\\&\cdot \frac{(q;q)_{2n+k-\ell-m-1}(q;q)_n }{ (q;q)_m (q;q)_{\ell}(q;q)_{k-\ell} (q;q)_{n-m-\ell}}.
\end{split}
\end{align}


Lemma \ref{lem1} tells us that the value of the inner sum in the RHS of \eqref{eq:simplifk} is the RHS of \eqref{innerml}, and so \eqref{eq:simplifk} becomes
\begin{align}\label{eq:afterinnerplug}
\frac{q^{4n^2-n}}{1-q^n} &= \sum_{k=0}^{n} \frac{ (-1)^{n-k} q^{(n-k) n + \binom{n-k}{2}}}{(q;q)_{n-k}} \left( (q^{k+1};q)_{n-1} \cdot (q^{k+n})^n (-1)^n q^{\binom{n}{2}} 
\right).
\end{align}
Dividing both sides of \eqref{eq:afterinnerplug} by $q^{2n^2+\binom{n}{2}}$ and simplifying, \eqref{eq:afterinnerplug} becomes
\begin{align}\label{eq:afterlm}
\frac{q^{2n^2-n-\binom{n}{2}}}{1-q^n} &= \sum_{k=0}^{n} \frac{ (-1)^{k} q^{\binom{n-k}{2}}}{(q;q)_{n-k}} (q^{k+1};q)_{n-1}.
\end{align}
Writing $(q^{k+1};q)_{n-1}$ as $\frac{(q;q)_{n-1} (q^n;q)_k}{(q;q)_k}$ in the RHS of \eqref{eq:afterlm}, we may write \eqref{eq:afterlm} as follows:
\begin{equation}\label{poctimessum}
\frac{q^{2n^2-n-\binom{n}{2}}}{1-q^n} = (q;q)_{n-1} \sum_{k=0}^{n} \frac{(-1)^k (q^n;q)_k}{(q;q)_k} \cdot \frac{q^{\binom{n-k}{2}}}{(q;q)_{n-k}}.
\end{equation}
The sum in the RHS of \eqref{poctimessum} is exactly the coefficient of $x^n$ in the formal series
\begin{equation}\label{almostd}
A(x)=\left( \sum_{i \ge 0} \frac{(-1)^i (q^n;q)_i x^i}{(q;q)_i} \right) \left( \sum_{j \ge 0} \frac{q^{\binom{j}{2}} x^j}{(q;q)_j} \right).
\end{equation}
According to \eqref{eq:f2} and \eqref{eq:f3}, 
\begin{equation}
A(x) = \frac{ (-q^n x; q)_{\infty}}{(-x;q)_{\infty}} \cdot (-x;q)_{\infty} = (-q^n x; q)_{\infty}.
\end{equation}
The coefficient of $x^n$ in $A(x)=(-q^n x; q)_{\infty}$ is (using \eqref{eq:f2} again) $$\frac{(-q^n)^n (-1)^n q^{\binom{n}{2}}}{(q;q)_n}.$$ After multiplying this coefficient by $(q;q)_{n-1}$, we get that the RHS of \eqref{poctimessum} is
\begin{equation}
\frac{q^{n^2  + \binom{n}{2}} }{1-q^n},
\end{equation}
which coincides with the LHS of \eqref{poctimessum}, since $n^2  + \binom{n}{2} = 2n^2-n-\binom{n}{2}$. This concludes the proof. \qed

\section{Representation Theoretical Background}\label{context}
Here we construct $\pi_{N,\psi}$ and compute its dimension.
\subsection{Definitions}
Fix a prime power $q$. Let $\bff$ be a finite field. \nomenclature[1]{$\bff$}{A finite field of cardinality $q$} We will fix a nontrivial character $\psi_0$ of $\bff$. Let $\bffm{m}$ be the unique field extension of degree $m$ of $\bff$. Denote $G:=\gl{3n}$. Let $P$ be the parabolic subgroup in $G$ of type  $(n,n,n)$:
	$$P=\left\{\left.\begin{pmatrix}
	A_{11} & A_{12} & A_{13}\\
	0 & A_{22} & A_{23}\\
	0&0  & A_{33}
	\end{pmatrix}\in G\ \right|\begin{matrix}
	A_{ij}\in M_n(\bff),\ A_{ii}\in \gl{n}\\ 1\leq i\leq j\leq 3
	\end{matrix} \right\}$$
	Let $M$ be the Levi subgroup of $P$:
	$$M=\left\{\left.\begin{pmatrix}
	A_{11}&0&0\\
	0&A_{22}&0\\
	0&0& A_{33}
	\end{pmatrix}\in G\ \right| A_{ii}\in \gl{n} ,\ 1\leq i\leq 3 \right\}$$	
	Let $N$ be the unipotent radical of $P$:
	$$N=\left\{\left.\begin{pmatrix}
	I_{n} & A_{12} & A_{13}\\
	0 & I_{n} & A_{23}\\
	0& 0 & I_{n}
	\end{pmatrix}\in G\ \right| A_{ij}\in M_n(\bff) ,\ 1\leq i <j \leq 3 \right\}$$
	Let $\psi_0:\bff \to\cff ^*$ be a nontrivial additive character. 

\begin{Def}
We define a nontrivial character  $\psi:N\to\mathbb{C}^*$ as follows. Given $u \in N$, we write it as
\begin{equation*}
	\quad u= \begin{pmatrix}
		I_{n} & X & Y\\
		0 & I_{n} & Z\\
		0& 0 & I_{n}
	\end{pmatrix}\quad ,X,Y,Z\in M_n(\bff).
\end{equation*}
We set 
$$\psi \left( u \right) :=\psi_{0}\left(\mathrm{tr}\left(X+Z\right)\right)=\psi_{0}\left(\mathrm{tr}\left(X\right)\right)\psi_{0}\left(\mathrm{tr}\left(Z\right)\right).$$
We will sometimes write $\psi\left(X,Z\right)$ for $\psi(u)$.
\nomenclature[8]{$\psi$}{A nontrivial multiplicative character of $N$} 
\end{Def}

Let $\pi$ be an irreducible representation of $G$ acting on a space $V_\pi$. \nomenclature[9]{$\pi$}{An irreducible representation of $\gl{3n}$}
We will denote by $\pi_{N,\psi}$ the largest subspace of $V_\pi$, on which $N$ operates through $\psi$, \nomenclature[91]{$\pi_{N,\psi}$}{The largest subspace of $V_\pi$ on which $N$ operates through $\psi$} i.e.
\begin{equation}
	{V_{\pi_{N,\psi}}}=\left\{ v\in V_{\pi}\ \left|\ \pi(u)v=\psi(u)v,\ \forall u\in N\right. \right\}.
\end{equation}
This is the twisted Jacquet module of $V_\pi$ with respect to $\left(N,\psi\right)$.
By definition, $V_{\pi_{N,\psi}}$ is the image of the projection $P_{N,\psi}: V_{\pi} \to V_{\pi}$ given by
\begin{equation} \label{projform}
	P_{N,\psi}\left(v\right)=\frac{1}{\left|N\right|}\sum_{u \in N}\pi(u) \overline{\psi(u)}v.
\end{equation}
Since $M$ normalizes $N$, it acts on the characters of $N$ in a well-defined manner, as follows:
\begin{equation*} 
\forall m \in M:	(m\cdot \psi)(u)=\psi \left(m^{-1}um\right)\quad (u\in N).
\end{equation*}
We have, for $m\in M$,
\begin{equation*} 
	\pi(m)V_{\pi_{N,\psi}}=V_{\pi_{N,m\cdot\psi}}
\end{equation*}
A short calculation reveals that the stabilizer of $\psi$ in $M$ is given by
\begin{equation*}
	H=\left\{\left.\begin{pmatrix}
	g&0&0\\
	0&g&0\\
	0&0& g
	\end{pmatrix}\in G\ \right| g\in \gl{n} \right\},
\end{equation*}
and in particular is isomorphic to $\gl{n}$. Therefore, $\pi_{N,\psi}$ is a $\gl{n}$-module.

The space $V_{\pi_{N,\psi}}$ is referred to as the space of degenerate Whittaker models of $\pi$ (with respect to $(N,\psi)$). We remark that the space of linear forms on $V_{\pi_{N,\psi}}$ is the same as the space of linear forms on $V_\pi$ on which $N$ operates via $\psi$, generalizing the notion of Whittaker models in the case of $\gl{3n}$.


\subsection{Cuspidal Representations}
We review the irreducible cuspidal representations of $\gl{m}$ as in S.I. Gelfand \cite[\S6]{gelfand1975} (originally in J. A. Green \cite{green1955}). Irreducible cuspidal representations of $\gl{m}$, from which all the other irreducible representations of $\gl{m}$ are obtained via the process of parabolic induction, are associated to regular characters of $\bffm{m}^*$. A multiplicative character $\theta$ of $\bffm{m}^*$ is called \textit{regular} if, under the action of the Galois group of $\bffm{m}$ over $\bff$, the orbit of $\theta$ consists of $m$ distinct characters of $\bffm{m}^*$.

We denote the irreducible cuspidal representation of $\gl{m}$ associated to a regular character $\theta$ of $\bffm{m}^*$ by $\pi_\theta$  \nomenclature[92]{$\pi_\theta$}{The irr. representation associated to a regular character $\theta$ of $$\bffm{n}^*$$ } and the character of the representation $\pi _\theta$ by $\Theta_\pi$.\nomenclature[93]{$\Theta_\pi$}{The character of $\pi _\theta$ }

Given $a\in\bffm{m}$, consider the map $m_a:\bffm{m}\to \bffm{m}$, defined by $m_a(x)=ax$. The map $a\mapsto m_a$ is an injective homomorphism of algebras $\bffm{m} \hookrightarrow \text{End}_{\bff}(\bffm{m})$. This way, every element of $\bffm{m}^*$ gives rise to a well-defined conjugacy class in $\gl{m}$. The conjugacy classes in $\gl{m}$, which are so obtained from elements of $\bffm{m}^*$, are said to be associated to $\bffm{m}^*$.

We summarize the information about the character $\Theta_\pi$ in the following theorem. We refer to the paper of S. I. Gelfand \cite[\S6]{gelfand1975} for the statement of this theorem in this explicit form, which is originally due to Green \cite[Thm.~14]{green1955} (See also the paper of Springer and Zelevinsky \cite{springer1984})  The theorem is quoted as it appears in \cite[Thm.~2]{prasad2000}.
\begin{Green}[Green \cite{green1955}]
	Let $\Theta_\pi$ be the character of a cuspidal representation $\pi_\theta$ of $\gl{m}$ associated to a regular character $\theta$ of $\bffm{m}^*$. Let $g = s \cdot u$ be the Jordan decomposition of an element $g$ in $\gl{m}$ ($s$ is a semi-simple element, $u$ is unipotent and $s,u$ commute). If $\Theta_\pi(g) \not= 0$, then the semi-simple element $s$ must come from $\bffm{m}^*$. Suppose that $s$ comes from $\bffm{m}^*$. Let $\lambda$ be an eigenvalue of $s$ in $\bffm{m}^*$, and let $t=\mathrm{dim}_{\bffm{m}}\ker (g-\lambda I)$. Then
	\begin{equation}
	\Theta_\pi(s\cdot u) = (-1)^{m-1}\left[\sum\limits_{\alpha=0}^{d-1}\theta(\lambda^{q^\alpha})\right](1-q^d)(1-({q^d})^2)\cdots(1-({q^d})^{t-1}) \label{green}
	\end{equation}
	where $q^d$ is the cardinality of the field generated by $\lambda$ over $\bff$, and the summation is over the various distinct Galois conjugates of $\lambda$. 
\end{Green}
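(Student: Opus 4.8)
The plan is to derive Green's character formula from Deligne--Lusztig theory. Write $G=\gl{m}$. Up to $G$-conjugacy the $\bff$-rational maximal tori of $G$ are indexed by conjugacy classes in the symmetric group $S_{m}$, and the torus $T$ attached to an $m$-cycle --- the \emph{Coxeter torus} --- is isomorphic to $\bffm{m}^{*}$. The first ingredient is the standard fact that for a regular character $\theta$ of $\bffm{m}^{*}\cong T$ the Deligne--Lusztig virtual character $R_{T}^{\theta}$ is $\varepsilon_{G}\varepsilon_{T}=(-1)^{m-1}$ times an irreducible character of degree $\prod_{i=1}^{m-1}(q^{i}-1)$, and that, as $\theta$ runs over regular characters of $\bffm{m}^{*}$ modulo the Galois action, these exhaust the cuspidal representations of $G$ (a count shows both families have $\tfrac{1}{m}\sum_{e\mid m}\mu(m/e)(q^{e}-1)$ members). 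Hence $\Theta_{\pi}=(-1)^{m-1}R_{T}^{\theta}$, and it suffices to evaluate $R_{T}^{\theta}(su)$. (Alternatively one can follow Green's original inductive construction of the character table of $\gl{m}$ from symmetric-function data; the geometric route is shorter to sketch.)

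Next I would apply the Deligne--Lusztig character formula; since semisimple elements of $G=\gl{m}$ have connected centralizers, it reads
\[
R_{T}^{\theta}(su)=\frac{1}{|C_{G}(s)|}\sum_{\substack{x\in G\\ x^{-1}sx\in T}}\theta(x^{-1}sx)\;Q^{C_{G}(s)}_{xTx^{-1}}(u),
\]
with $Q^{C_{G}(s)}_{xTx^{-1}}$ the Green function of $C_{G}(s)$ relative to the maximal torus $xTx^{-1}$ (which contains the central element $s$). If $\Theta_{\pi}(su)\neq 0$ this sum is non-empty, so $s$ is $G$-conjugate into $T\cong\bffm{m}^{*}$, i.e.\ $s$ comes from $\bffm{m}^{*}$; that is the first assertion. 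Suppose then $s$ comes from $\bffm{m}^{*}$, with eigenvalue $\lambda$ generating $\bffm{d}$ over $\bff$. Then $d\mid m$, the characteristic polynomial of $s$ is the $(m/d)$-th power of the minimal polynomial of $\lambda$, and $C_{G}(s)\cong\gl[\bffm{d}]{m/d}$; write $r=m/d$. Under this identification $u$ is a unipotent element of $\gl[\bffm{d}]{r}$, each torus $xTx^{-1}$ appearing is a maximal torus of $C_{G}(s)$ of order $q^{m}-1$, hence up to $C_{G}(s)$-conjugacy the Coxeter torus of $\gl[\bffm{d}]{r}$, and the integer $t$ of the statement is the number of Jordan blocks of $u$ in $\gl[\bffm{d}]{r}$ (equivalently $\dim_{\bffm{m}}\ker(g-\lambda I)$, since on the $\lambda$-eigenspace of $s$ one has $g=\lambda u$). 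The elements of $T$ that are $G$-conjugate to $s$ are exactly its $d$ Galois conjugates $\lambda,\lambda^{q},\dots,\lambda^{q^{d-1}}$; for each such $\nu$ the set $\{x:x^{-1}sx=\nu\}$ is a single left coset of $C_{G}(s)$, and since $Q$ is a class function of $C_{G}(s)$ that coset contributes $|C_{G}(s)|\cdot Q^{\,\gl[\bffm{d}]{r}}_{\mathrm{cox}}(u)$. Dividing by $|C_{G}(s)|$ we obtain
\[
R_{T}^{\theta}(su)=\Bigl(\sum_{\alpha=0}^{d-1}\theta(\lambda^{q^{\alpha}})\Bigr)\,Q^{\,\gl[\bffm{d}]{r}}_{\mathrm{cox}}(u).
\]

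The remaining step --- and the main obstacle --- is to evaluate the Coxeter-torus Green function at a unipotent element, i.e.\ to prove that $Q^{\,\gl[\bffm{d}]{r}}_{\mathrm{cox}}(u)$ depends on $u$ only through its number $t$ of Jordan blocks and equals $\prod_{i=1}^{t-1}\bigl(1-(q^{d})^{i}\bigr)$. The case $u=1$, $t=r$, is just the Deligne--Lusztig degree formula, $Q^{\,\gl[\bffm{d}]{r}}_{\mathrm{cox}}(1)=\prod_{i=1}^{r-1}\bigl(1-(q^{d})^{i}\bigr)$. For the general case I would induct on $r$, removing one Jordan block at a time via the compatibility of Green functions with Harish-Chandra restriction; equivalently, one uses the decomposition of the Coxeter Deligne--Lusztig character $R^{1}_{T_{\mathrm{cox}}}$ as an alternating sum $\sum_{k=0}^{r-1}(-1)^{k}\chi_{(r-k,1^{k})}$ of the unipotent characters of $\gl[\bffm{d}]{r}$ labelled by hook partitions, whose values on unipotent classes are classically known --- this is essentially the combinatorial heart of Green's computation, so here is where genuine work (or a citation to \cite{green1955}) is needed. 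Inserting this value and multiplying by $(-1)^{m-1}=\varepsilon_{G}\varepsilon_{T}$ yields
\[
\Theta_{\pi}(su)=(-1)^{m-1}\Bigl(\sum_{\alpha=0}^{d-1}\theta(\lambda^{q^{\alpha}})\Bigr)\prod_{i=1}^{t-1}\bigl(1-(q^{d})^{i}\bigr),
\]
which is precisely \eqref{green}. All the remaining manipulations --- the vanishing criterion, the structure of $C_{G}(s)$, and the collapse of the sum over $x$ --- are routine bookkeeping with the Deligne--Lusztig formula.
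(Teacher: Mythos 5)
A point of context first: the paper does not prove this statement at all --- it is Green's classical theorem, imported as a black box (quoted from \cite{green1955} in the form given in \cite{prasad2000}, see also \cite{gelfand1975}, \cite{springer1984}), so there is no in-paper proof to compare yours against. Judged on its own terms, the Deligne--Lusztig skeleton of your argument is the standard modern route and its formal steps are correct: $\Theta_\pi=(-1)^{m-1}R_T^{\theta}$ for the Coxeter torus $T\cong\bffm{m}^{*}$ and $\theta$ regular (with the counting argument showing these exhaust the cuspidals), the character formula with connected centralizers, the vanishing criterion forcing $s$ to come from $\bffm{m}^{*}$, the identification $C_G(s)\cong\gl[\bffm{d}]{m/d}$, the order argument pinning $xTx^{-1}$ down as the Coxeter torus of the centralizer, and the collapse of the sum over $x$ onto the Galois orbit of $\lambda$. (A minor caveat: matching Green's labeling $\theta\mapsto\pi_\theta$, as used by Gelfand and Prasad, with the Deligne--Lusztig labeling $\theta\mapsto(-1)^{m-1}R_T^{\theta}$ deserves a word, though it is immaterial for the paper's application, where $\lambda=1$ and $d=1$.)

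The genuine gap is the one you flag yourself: the evaluation $Q^{\gl[\bffm{d}]{r}}_{\mathrm{cox}}(u)=\prod_{i=1}^{t-1}\bigl(1-(q^{d})^{i}\bigr)$, and in particular the assertion that this Green function depends on $u$ only through its number $t$ of Jordan blocks. That statement \emph{is} the content of \eqref{green} beyond the two easy checks ($u$ regular, $t=1$; and $u=1$, $t=r$, the degree formula); everything else in your write-up is bookkeeping around it. Deferring it to ``a citation to \cite{green1955}'' is circular if the goal is to prove Green's theorem, and the alternative you sketch --- expanding $R^{1}_{T_{\mathrm{cox}}}$ as the alternating sum of hook unipotent characters --- merely relocates the difficulty, since the values of those characters on unipotent classes are Green polynomials, classically obtained from the very computation in question. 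To make this a proof you would have to carry out one of the routes you name in earnest: an induction on the Jordan type using the compatibility of Green functions with Harish-Chandra (Levi) restriction, or Lusztig's analysis of the Coxeter Deligne--Lusztig variety, or the approach of Springer--Zelevinsky \cite{springer1984} already cited in the paper. As written, your proposal is a correct reduction of \eqref{green} to the Coxeter Green-function identity, not a proof of it.
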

\begin{Greencor}\label{greencor}
	The value $\Theta_\pi(g)$ is determined by the eigenvalue of $g$ and the number of Jordan blocks of $g$, which, in turn, is determined by  $\mathrm{dim}_{\bffm{m}}\ker (g-\lambda I)$.
\end{Greencor}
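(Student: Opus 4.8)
The plan is to read Corollary \ref{greencor} directly off the character formula \eqref{green} of Green's theorem, supplemented by a short remark on the eigenvalue structure of an element $g\in\gl{m}$ for which $\Theta_\pi(g)\neq 0$.

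First I would dispose of the degenerate case: if the semisimple part $s$ of $g$ does not come from $\bffm{m}^*$, then $\Theta_\pi(g)=0$ by Green's theorem, so the value is trivially determined. So assume $s$ comes from $\bffm{m}^*$, fix an eigenvalue $\lambda\in\bffm{m}^*$ of $g$, let $q^d$ be the cardinality of $\bff(\lambda)$, and set $t=\dim_{\bffm{m}}\ker(g-\lambda I)$. Every factor on the right-hand side of \eqref{green} is then visibly a function of the pair $(\lambda,t)$ alone: the sign $(-1)^{m-1}$ is a constant, the character sum $\sum_{\alpha=0}^{d-1}\theta(\lambda^{q^\alpha})$ depends only on $\lambda$ (the integer $d$ and the set of Galois conjugates of $\lambda$ being determined by $\lambda$, and $\theta$ being fixed), and the product $(1-q^d)(1-q^{2d})\cdots(1-q^{(t-1)d})$ depends only on $d$ and $t$. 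Hence $\Theta_\pi(g)$ is a function of $\lambda$ and $t$ only.

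Next I would identify $t$ with a count of Jordan blocks. Since $\lambda\in\bffm{m}$ and $g$ has entries in $\bff$, we have $\dim_{\bffm{m}}\ker(g-\lambda I)=\dim_{\overline{\bff}}\ker(g-\lambda I)$, which is the geometric multiplicity of $\lambda$, that is, the number of Jordan blocks of $g$ with eigenvalue $\lambda$. Because $g$ is rational over $\bff$, the Frobenius automorphism permutes the Jordan blocks of $g$ and cyclically permutes the eigenvalues $\lambda,\lambda^q,\dots,\lambda^{q^{d-1}}$; thus the number of Jordan blocks with eigenvalue $\lambda^{q^\alpha}$ equals $t$ for every $\alpha$. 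When $\Theta_\pi(g)\neq 0$, Green's theorem forces every eigenvalue of $g$ to be a Galois conjugate of $\lambda$, so the total number of Jordan blocks of $g$ is precisely $dt$. Consequently, given the eigenvalue $\lambda$ (which pins down $d$), the total Jordan-block count and the quantity $t$ determine one another, and by the previous paragraph each of these, together with $\lambda$, determines $\Theta_\pi(g)$. This is exactly the assertion of the corollary.

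The argument is essentially bookkeeping; the only step meriting attention is the identification of $t$ with the global block count, which uses the vanishing part of Green's theorem — itself a reflection of the regularity of $\theta$ — to guarantee that all eigenvalues of $g$ lie in a single Frobenius orbit. If instead one reads ``the number of Jordan blocks of $g$'' as the number of blocks with eigenvalue $\lambda$, then $t$ is literally that number and this step is vacuous.
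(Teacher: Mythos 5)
Your proposal is correct and follows the same (essentially only possible) route as the paper, which states the corollary without a separate argument as an immediate reading of the character formula \eqref{green}: every factor depends only on $\lambda$ and $t=\dim_{\bffm{m}}\ker(g-\lambda I)$. Your extra bookkeeping identifying $t$ with the Jordan-block count (per conjugate eigenvalue, hence total count $dt$ once the vanishing part of Green's theorem confines the eigenvalues to one Frobenius orbit) is accurate and only makes explicit what the paper leaves implicit, in particular in the unipotent case $\lambda=1$ where the corollary is actually applied.
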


\subsection{Linear Algebra}
In order to calculate the dimension of the degenerate Whittaker models, we need several results on the number of matrices satisfying certain conditions.

\begin{enumerate}
\item Let $\left| \mathrm{Gr}\left(m+k,m\right)\right|$ denote the number of $m$-dimensional subspaces in {$\bff^{m+k}$}. There is a well known formula for $\left| \mathrm{Gr}\left(m+k,m\right)\right|$ \cite[Thm.~1]{cohn2004}:
	\begin{equation}
		\left|\mathrm{Gr}\left(m+k,m\right)\right|=\frac{\prod_{i=1}^{m+k}\left(q^i-1\right)}{\prod_{i=1}^{m}\left(q^i-1\right)\prod_{i=1}^{k}\left(q^i-1\right)}.\label{grvalue}
	\end{equation}
	\item Given $\alpha \in \bff$, we will denote by $Y_{m,k}^\alpha$ the number of square matrices of order $m+k$ over $\bff$ with a fixed rank $k$ and fixed trace $\alpha$. The linearity of the trace implies that for any $\alpha \in \bff^{\times}$, we have
\begin{equation}
	Y_{m,k}^\alpha=Y_{m,k}^1 \label{yalphay1}
\end{equation}
In addition, we have the following lemma by Prasad.
\begin{seclem}\cite[Lem.~2]{prasad2000} \label{pr_y_to_grasm_lem}
	\begin{equation}
	Y_{m,k}^{1}-Y_{m,k}^{0}=\left(-1\right)^{k-1}q^{\frac{k\left(k-1\right)}{2}}\left|\mathrm{Gr}\left(m+k,m\right)\right|.
	\end{equation}
\end{seclem}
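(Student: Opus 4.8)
The plan is to detect the trace condition with additive characters and then evaluate the resulting exponential sum by sorting matrices according to their column space. Put $N=m+k$. From the orthogonality relation, which says that $\frac1q\sum_{t\in\bff}\psi_0\bigl(t(\mathrm{tr}(A)-\alpha)\bigr)$ equals $1$ if $\mathrm{tr}(A)=\alpha$ and $0$ otherwise, one gets
\[
Y_{m,k}^\alpha=\frac1q\sum_{t\in\bff}\psi_0(-t\alpha)\sum_{\substack{A\in M_N(\bff)\\ \mathrm{rank}(A)=k}}\psi_0\bigl(t\,\mathrm{tr}(A)\bigr).
\]
For $t\neq0$ the map $A\mapsto tA$ is a rank-preserving bijection of $M_N(\bff)$, so the inner sum equals $S:=\sum_{\mathrm{rank}(A)=k}\psi_0(\mathrm{tr}(A))$ for every such $t$; for $t=0$ it is the number of rank-$k$ matrices. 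Subtracting the instances $\alpha=1$ and $\alpha=0$, the $t=0$ contribution and the rank count cancel, and since $\sum_{t\neq0}\psi_0(-t)=-1$ while $\sum_{t\neq0}\psi_0(0)=q-1$, we are left with $Y_{m,k}^1-Y_{m,k}^0=-S$. It thus suffices to show $S=(-1)^{k}q^{\binom k2}\,|\mathrm{Gr}(m+k,m)|$.

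To evaluate $S$, I would sort rank-$k$ matrices by their column space. For a subspace $V\subseteq\bff^N$ set $C(V):=\sum_{\mathrm{colspace}(A)\subseteq V}\psi_0(\mathrm{tr}(A))$ and $c(V):=\sum_{\mathrm{colspace}(A)=V}\psi_0(\mathrm{tr}(A))$. Conjugating by a suitable element of $\gl{N}$ we may take $V=\mathrm{span}(e_1,\dots,e_d)$, where $d=\dim V$; then the matrices with columns in $V$ are exactly those whose last $N-d$ rows vanish, and on this $\bff$-subspace of $M_N(\bff)$ the trace is the sum of the first $d$ diagonal entries, that is, a surjective linear functional when $d\ge1$ and identically $0$ when $d=0$. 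Hence $C(V)=1$ if $V=\{0\}$ and $C(V)=0$ otherwise. Since $\gl{N}$-conjugation preserves the trace and is transitive on subspaces of a fixed dimension, $c(V)$ depends only on $d$; denote it $c_d$. Every $A$ with $\mathrm{colspace}(A)\subseteq V$ has $\mathrm{colspace}(A)$ equal to a unique subspace of $V$, so $\sum_{V'\subseteq V}c(V')=C(V)$; grouping the $V'$ by dimension yields
\[
\sum_{j=0}^{d}\bigl|\mathrm{Gr}(d,j)\bigr|\,c_j=\delta_{d,0}\qquad(d\ge0),
\]
where $|\mathrm{Gr}(d,j)|$ is the number of $j$-dimensional subspaces of $\bff^d$.

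This triangular system has a unique solution. By the $q$-binomial theorem in its finite form, $\sum_{j=0}^{d}|\mathrm{Gr}(d,j)|(-1)^jq^{\binom j2}x^j=(x;q)_d$, which vanishes at $x=1$ for $d\ge1$ and equals $1$ for $d=0$; hence $c_d=(-1)^dq^{\binom d2}$. Summing over $k$-dimensional $V$,
\[
S=\sum_{\dim V=k}c_k=(-1)^kq^{\binom k2}\,\bigl|\mathrm{Gr}(m+k,k)\bigr|,
\]
and since the formula \eqref{grvalue} for $|\mathrm{Gr}(m+k,m)|$ is symmetric in $m$ and $k$, this equals $(-1)^kq^{\binom k2}\,|\mathrm{Gr}(m+k,m)|$. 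Together with $Y_{m,k}^1-Y_{m,k}^0=-S$ and $\binom k2=k(k-1)/2$, the lemma follows.

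The step I expect to be the main obstacle is the identification of $C(V)$: one must sort rank-$k$ matrices by column space, which is what makes the trace a genuinely nonzero linear functional on each piece and forces $C(V)$ to collapse to the indicator of $V=\{0\}$; everything after that is a routine M\"obius (equivalently, $q$-binomial) inversion. As a byproduct, the case $m=0$ recovers the classical evaluation $\sum_{g\in\gl{k}}\psi_0(\mathrm{tr}\,g)=(-1)^kq^{\binom k2}$; one could alternatively start from that identity after factoring each rank-$k$ matrix as a product of a full-column-rank $N\times k$ matrix and a full-row-rank $k\times N$ matrix, but that route requires messier bookkeeping.
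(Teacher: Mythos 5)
Your proof is correct, and note that the paper itself gives no proof of Lemma \ref{pr_y_to_grasm_lem}: it is simply quoted from \cite[Lem.~2]{prasad2000}, so your self-contained argument is a genuine addition rather than a variant of something in the text. The two steps both check out. First, the orthogonality reduction is clean: writing $Y^\alpha_{m,k}=\frac1q\sum_{t}\psi_0(-t\alpha)\sum_{\mathrm{rank}(A)=k}\psi_0(t\,\mathrm{tr}(A))$ and using that $A\mapsto tA$ preserves rank for $t\neq0$, the $t=0$ terms cancel in the difference and indeed $Y^1_{m,k}-Y^0_{m,k}=-S$ with $S=\sum_{\mathrm{rank}(A)=k}\psi_0(\mathrm{tr}(A))$. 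Second, the evaluation of $S$ by stratifying according to the column space is complete: conjugation by $\mathrm{GL}_{m+k}$ preserves the trace and moves any $d$-dimensional $V$ to $\mathrm{span}(e_1,\dots,e_d)$, on the space of matrices with column space inside such a $V$ the trace is a nonzero linear functional once $d\ge1$, so $C(V)=\delta_{V,\{0\}}$; the resulting triangular system $\sum_{j=0}^{d}|\mathrm{Gr}(d,j)|\,c_j=\delta_{d,0}$ has the unique solution $c_d=(-1)^dq^{\binom d2}$ by the finite $q$-binomial theorem evaluated at $x=1$; and the final symmetry $|\mathrm{Gr}(m+k,k)|=|\mathrm{Gr}(m+k,m)|$, visible from \eqref{grvalue}, gives $S=(-1)^kq^{\binom k2}|\mathrm{Gr}(m+k,m)|$ and hence the stated sign $(-1)^{k-1}$. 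Your closing remark is also apt: the case $m=0$ recovers the classical evaluation of $\sum_{g\in\gl{k}}\psi_0(\mathrm{tr}\,g)$, and your column-space Möbius inversion is arguably tidier than the alternative factorization through full-rank rectangular matrices.
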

\item We will denote by $Z_{s,t,k}$ the number of matrices of order $s\times t$ with a fixed rank $k$ over $\bff$.  \nomenclature[98]{$Z_{s,t,k}$}{Number of matrices of order $s\times t$ with a fixed rank $k$.}  We have the following formula for $Z_{s,t,k}$ by Landsberg.
\begin{seclem}\cite{landsberg1893} \label{nonsqrlem}
\begin{equation}
	Z_{s,t,k}=\prod_{i=0}^{k-1}\frac{\left(q^s-q^i\right)\left(q^t-q^i\right)}{\left(q^k-q^i\right)}. \label{nonsqreq}
\end{equation}
\end{seclem}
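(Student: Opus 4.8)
The plan is to count pairs consisting of a rank-$k$ matrix $A \in M_{s \times t}(\bff)$ by first choosing its column space, then counting surjections onto it. First I would note that a matrix of rank $k$ is equivalent to the data of a $k$-dimensional subspace $W \subseteq \bff^s$ (its column space) together with a surjective linear map $\bff^t \twoheadrightarrow W$. The number of $k$-dimensional subspaces of $\bff^s$ is the Gaussian binomial $\left|\mathrm{Gr}(s,k)\right| = \prod_{i=0}^{k-1} \frac{q^s - q^i}{q^k - q^i}$, which follows from \eqref{grvalue} after the substitution $m = k$, $m+k \mapsto s$. For the second factor, the number of surjective linear maps from $\bff^t$ onto a fixed $k$-dimensional space is the number of $t$-tuples of vectors in $\bff^k$ that span $\bff^k$; equivalently, the number of $k \times t$ matrices of rank $k$, which by the standard column-by-column argument is $\prod_{i=0}^{k-1}(q^t - q^i)$ (having chosen the first $i$ columns spanning an $i$-dimensional space, the next column must avoid that space).

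Multiplying the two counts gives
\begin{equation*}
Z_{s,t,k} = \left(\prod_{i=0}^{k-1} \frac{q^s - q^i}{q^k - q^i}\right)\left(\prod_{i=0}^{k-1}(q^t - q^i)\right) = \prod_{i=0}^{k-1} \frac{(q^s - q^i)(q^t - q^i)}{q^k - q^i},
\end{equation*}
which is exactly \eqref{nonsqreq}. One should record the boundary cases: the formula is an empty product equal to $1$ when $k = 0$ (the unique zero matrix), and it vanishes as soon as $k > \min\{s,t\}$ since then a factor $q^s - q^i$ or $q^t - q^i$ with $i \ge \min\{s,t\}$ forces... actually one simply restricts to $0 \le k \le \min\{s,t\}$, the only range in which rank-$k$ matrices exist.

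There is essentially no obstacle here; the only point requiring a sentence of care is the equivalence ``rank-$k$ matrix $\leftrightarrow$ (column space, surjection onto it)'', i.e. that fixing the image $W$ and ranging over all surjections $\bff^t \to W$ produces each rank-$k$ matrix exactly once — this is immediate since a linear map is determined by its matrix and its image is intrinsic. Alternatively, one can cite \cite{landsberg1893} directly, as the statement does; the argument above is included only for completeness.
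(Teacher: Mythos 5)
Your decomposition is sound and yields exactly \eqref{nonsqreq}; note, however, that the paper does not prove this lemma at all --- it simply quotes the count from \cite{landsberg1893} --- so your argument is a self-contained substitute rather than a variant of anything in the text. The structure (a rank-$k$ matrix $A\in M_{s\times t}(\bff)$ $\leftrightarrow$ a $k$-dimensional column space $W\subseteq\bff^s$ together with the induced surjection $\bff^t\twoheadrightarrow W$) is correct and the Grassmannian count $\prod_{i=0}^{k-1}\frac{q^s-q^i}{q^k-q^i}$ agrees with \eqref{grvalue}. The one slip is in your justification of the second factor: the product $\prod_{i=0}^{k-1}(q^t-q^i)$ does \emph{not} come from choosing the $t$ columns of a $k\times t$ matrix one at a time (each column lies in $\bff^k$, so it has only $q^k$ possible values, and for $t>k$ the columns of a rank-$k$ matrix need not be greedily independent --- counting spanning $t$-tuples column by column would require inclusion--exclusion). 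The correct one-line argument is by rows: a $k\times t$ matrix has rank $k$ exactly when its $k$ rows are linearly independent vectors of $\bff^t$, which gives $(q^t-1)(q^t-q)\cdots(q^t-q^{k-1})$ choices. With that sentence repaired the proof is complete; equivalently, you could count factorizations $A=BC$ with $B$ of size $s\times k$ of full column rank and $C$ of size $k\times t$ of full row rank, unique up to $\gl{k}$, which gives $Z_{s,t,k}=\prod_{i=0}^{k-1}(q^s-q^i)\prod_{i=0}^{k-1}(q^t-q^i)/\prod_{i=0}^{k-1}(q^k-q^i)$ in exactly the stated form.
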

\end{enumerate}

\subsection{Calculation of the Dimension of \texorpdfstring{$\pi_{N,\psi}$}{Pi {N,psi}}}
We will prove that $\mathrm{dim}\left(\pi_{N,\psi}\right)$ is equal to the RHS of \eqref{ziden}.
Given $u \in N$, we write it as
\begin{alignat}{3}
	&u&&= \begin{pmatrix}
	I_{n} & X & Y\\
	0 & I_{n} & Z\\
	0& 0 & I_{n}
	\end{pmatrix},&&\quad  X,Y,Z\in M_n(\bff).
\end{alignat}
We will use the following notation:
\begin{alignat}{3}
&I_{k,n} &&= \begin{pmatrix}
I_k&0\\0&0_{n-k}
\end{pmatrix}, &&\quad (k\leq n)\\
&I_{n,m} &&= \begin{pmatrix}
0_{n-m}&0\\0&I_{m}
\end{pmatrix}, &&\quad (n\geq m)\\
&I^n_{k,l,m} &&= \begin{pmatrix}
0_{k,l}&0&0\\I_l&0&0_{l,m}\\0&0&0
\end{pmatrix}_{n\times n},&&\quad (k+l+m\leq n)
\end{alignat}
Clearly,
\begin{equation} \label{dim_first}
	\mathrm{dim}\left(\pi_{N,\psi}\right)=\frac{1}{|N|}\sum_{u\in N}\Theta_\pi (u)\overline{\psi (u)}= \frac{1}{q^{3n^{2}}}\sum_{X,Y,Z\in M_{n}\left(\bff\right)}\Theta_\pi\left(u\right)\overline{\psi}\left(X,Z\right).
\end{equation}
By Corollary \ref{greencor}, the value $\Theta_\pi(u)$ is determined by $\mathrm{dim}_{\bffm{3n}}\ker (u-I)$ (because 1 is the only eigenvalue of $u$) which is in turn determined by $\mathrm{rank}_{\bffm{3n}}(u-I)$. The character $\psi\left(X,Z\right)$ is determined by the traces of $X,Z$. Therefore, we will split the sum in \eqref{dim_first} by the ranks and traces,
\begin{equation}
	\mathrm{dim}\left(\pi_{N,\psi}\right)=\frac{1}{q^{3n^{2}}} \sum_{\substack{m,k=0\\rkX=k,rkZ=m}}^{n}  
	\sum_{\substack{\alpha,\beta\in\bff\\\mathrm{tr}X=\alpha,\mathrm{tr}Z=\beta}} \sum_{Y\in M_{n}(\bff)}\Theta_\pi\left(u\right)\overline{\psi_{0}}\left(\alpha+\beta\right).
\end{equation} 
For a matrix $X$ of rank $k$ there exist invertible matrices $E_1,E_3$ such that $X=E_1 I_{k,n}E_3$. Similarly, there are $E_2,E_4$ such that $Z=E_2 I_{n,m}E_4$.
So, one can write $u$ as 
\begin{equation} \label{elementaryop1}
	u= I_{3n}
 + \begin{pmatrix}
	E_{1} & 0 & 0\\
	0 & E_{2} & 0\\
	0 & 0 & I_{n}
	\end{pmatrix}
	\begin{pmatrix}
		0 & I_{k,n} & \widetilde{Y}\\
		0 & 0 & I_{n,m}\\
		0 & 0 & 0
	\end{pmatrix}
	\begin{pmatrix}
	I_{n} & 0 & 0\\
	0 & E_{3} & 0\\
	0 & 0 & E_{4}
	\end{pmatrix},
\end{equation}
where $\widetilde{Y}=E_{1}^{-1}YE_{4}^{-1}$.
 Together with the fact that rank is invariant under elementary operations, we now have
\begin{equation}\label{dim0}
\begin{split}
\mathrm{dim}\left(\pi_{N,\psi}\right)=\frac{1}{q^{3n^{2}}} \sum_{m,k=0}^{n} \sum_{\alpha,\beta\in\bff} \sum_{\widetilde{Y}}& \Theta_\pi \left(I_{3n}+
			\begin{pmatrix}
			0 & I_{k,n} & \widetilde{Y}\\
			0 & 0 & I_{n,m}\\
			0 & 0 & 0
			\end{pmatrix}\right) \\&\cdot \overline{\psi_{0}}\left(\alpha+\beta\right)Y_{n-k,k}^{\alpha}Y_{n-m,m}^{\beta}. 
\end{split}
\end{equation}
We can use Gaussian elimination operations on $\widetilde{Y}$ (which do not affect the rank nor dimension of the kernel of the matrix minus $I_{3n}$, and the number of Jordan blocks is not affected as well) as follows: the corresponding elements of $\widetilde{Y}$ are being canceled by the pivot elements in $I_{n,m}$ (using row elementary operations) and by the pivot elements in $I_{k,n}$ (using column elementary operations). Equation \ref{dim0} becomes
\begin{equation}\label{dim1}
\begin{split}
\mathrm{dim}\left(\pi_{N,\psi}\right)=\frac{1}{q^{3n^{2}}} \sum_{m,k=0}^{n} \sum_{\alpha,\beta\in\bff} & \overline{\psi_{0}} \left(\alpha+\beta\right) Y_{n-k,k}^{\alpha} Y_{n-m,m}^{\beta}q^{kn+\left(n-k\right)m} \\&\cdot
\sum_{\ell=0}^{n-\mathrm{max}\left\{ k,m\right\} } \Theta_\pi 
\left(g\right) Z_{n-k,n-m,\ell}, 
\end{split}
\end{equation}
where
\begin{equation}
g= I_{3n}+\begin{pmatrix}
0 & I_{k,n} & I^n_{k,\ell,m}\\
0 & 0 & I_{n,m}\\
0 & 0 & 0
\end{pmatrix}.
\end{equation}
According to the character formula \eqref{green}, we can calculate $\Theta_\pi(g)$. In this case $m=3n$, $g=s \cdot u$ where $s=I_{3n}$, so $\lambda=1$ and 
$$t=\mathrm{dim}\ \mathrm{ker}(g-I)=3n-\mathrm{rk}(g-I)=3n-k-m-\ell.$$
So,
\begin{align}
	\Theta_\pi \left(g\right) ={}& (-1)^{3n-1}(1-q)(1-q^2)\cdots(1-q^{3n-k-m-\ell-1}) \nonumber\\
	={}& (-1)^{k+m+\ell}\prod_{i=1}^{3n-k-m-\ell}\left(q^i-1\right). \label{cuspchar}
\end{align}
Let 
\begin{equation}
S(m,k) = \sum_{\alpha,\beta\in\bff} \overline{\psi_{0}} \left(\alpha+\beta\right) Y_{n-k,k}^{\alpha} Y_{n-m,m}^{\beta},
\end{equation}
which is the inner sum in \eqref{dim1}. It is independent of the other sums and by changing the order of summation can be calculated separately. From \eqref{yalphay1} and $\sum \limits _{x\in \bff^*} \psi_0(x)=-1$, we find that
\begin{align}
 S(m,k)&=Y_{n-k,k}^{0}Y_{n-m,m}^{0}+Y_{n-k,k}^{1}Y_{n-m,m}^{0}\sum_{\alpha\in\bff^*} \overline{\psi_{0}} \left(\alpha\right) \nonumber\\
 &+Y_{n-k,k}^{0}Y_{n-m,m}^{1}\sum_{\beta\in\bff^*} \overline{\psi_{0}} \left(\beta\right)+Y_{n-k,k}^{1}Y_{n-m,m}^{1}\left(\sum_{\gamma\in\bff^*} \overline{\psi_{0}} \left(\gamma\right)\right)^2 \nonumber\\
 &={} Y_{n-k,k}^{0}Y_{n-m,m}^{0}-Y_{n-k,k}^{1}Y_{n-m,m}^{0}-Y_{n-k,k}^{0}Y_{n-m,m}^{1}+Y_{n-k,k}^{1}Y_{n-m,m}^{1}\nonumber\\ 
  &={} \left(Y_{n-k,k}^{1}-Y_{n-k,k}^{0}\right)\left(Y_{n-m,m}^{1}-Y_{n-m,m}^{0}\right). \label{y1y0use} 
\end{align}
By Lemma \ref{pr_y_to_grasm_lem} and \eqref{y1y0use}, we find that
\begin{equation}
S(m,k) =\left(-1\right)^{k-1}q^{\frac{k\left(k-1\right)}{2}}\left|\mathrm{Gr}\left(n,n-k\right)\right|\left(-1\right)^{m-1}q^{\frac{m\left(m-1\right)}{2}}\left|\mathrm{Gr}\left(n,n-m\right)\right| \label{y_to_grasm}.
\end{equation}
By \eqref{grvalue} and \eqref{y_to_grasm}, we have
\begin{equation}
S(m,k)=\left(-1\right)^{k+m}\frac{\prod_{i=0}^{k-1}\left(q^n-q^i\right)\prod_{i=0}^{m-1}\left(q^n-q^i\right)}{\prod_{i=1}^{m}\left(q^i-1\right)\prod_{i=1}^{k}\left(q^i-1\right)}. \label{sumcharys}
\end{equation}
Using Lemma \ref{nonsqrlem} and substituting \eqref{cuspchar} and \eqref{sumcharys} into \eqref{dim1} gives that $\mathrm{dim}\left(\pi_{N,\psi}\right)$ is the RHS of \eqref{ziden}, as wanted.

\section{Acknowledgements}
We would like to thank David Soudry for providing the research idea developed in \cite{zahi2016}. We thank Chan Heng Huat for advice regarding identity \eqref{ziden}. We also want to thank the Research Institute for Symbolic Computation (RISC), whose Mathematica package "qMultiSum" helped us to come up with Lemma \ref{lem1}. We would like to thank especially Peter Paule, Axel Riese, and Ralf Hemmecke, who helped us with running and modifying the package.
The first author was supported by the Israel Science Foundation grant no. 952/14.
The second author was supported by grant no. 210/10 of the Israel Science Foundation (ISF) and the USA-Israel Binational Science Foundation grant no. 2012019.

\bibliographystyle{alpha}
\bibliography{references}

\end{document}